\author{H. Egger}
\address{Department of Mathematics, TU Darmstadt, Germany}
\email{egger@mathematik.tu-darmstadt.de}
\title[Galerkin approximation of Hamiltonian and gradient system]{Energy stable Galerkin approximation of Hamiltonian and gradient systems}
\newtheorem{theorem}{Theorem}
\theoremstyle{definition}
\newtheorem{remark}{Remark}
\def\C{\mathcal{C}}
\def\H{\mathcal{H}}
\def\L{\mathcal{L}}
\def\B{\mathcal{B}}
\def\u{u}
\def\v{v}
\def\f{f}
\def\g{g}
\def\RR{\mathbb{R}}
\def\VV{\mathbb{V}}
\def\dt{\partial_t}
\def\ddt{\frac{d}{dt}}
\def\dn{\partial_n}
\DeclareMathOperator{\curl}{curl}
\DeclareMathOperator{\divergence}{div}
\def\a{\mathbf{a}}
\def\b{\mathbf{b}}
\def\e{\mathbf{e}}
\def\j{\mathbf{j}}
\def\n{\mathbf{n}}
\def\w{\mathbf{w}}
\numberwithin{equation}{section}
\begin{document}

\begin{abstract}
A general framework for the numerical approximation of evolution 
problems is presented that allows to preserve exactly an underlying Hamiltonian- or gradient 
structure. The approach relies on rewriting the evolution problem in a particular form that 
complies with the underlying geometric structure. The Galerkin approximation of a
corresponding variational formulation in space then automatically preserves this 
structure which allows to deduce important properties for appropriate discretization schemes 
including projection based model order reduction.
We further show that the underlying structure is preserved also under
time discretization by a Petrov-Galerkin approach. 
The presented framework is rather general and allows the numerical approximation of a wide range 
of applications, including nonlinear partial differential equations and port-Hamiltonian systems. 
Some examples will be discussed for illustration of our theoretical results and connections 
to other discretization approaches will be revealed.
\end{abstract}

\maketitle

\vspace*{-1em}

\begin{quote}
\noindent 
{\small {\bf Keywords:} 
Hamiltonian systems, 
gradient systems,
nonlinear partial differential equations,
entropy methods,
Galerkin approximation
}
\end{quote}

\begin{quote}
\noindent
{\small {\bf AMS-classification (2000):}
37K05, 
37L65, 
47J35, 
65J08  
}
\end{quote}

\section{Introduction} \label{sec:1}

The modeling of dynamical systems often leads to 
problems with a Hamiltonian or gradient structure which have been studied intensively in the literature. 
In this paper, we consider abstract evolution problems of the general form 
\begin{align} \label{eq:sys}
\C(\u) \dt \u &= -\H'(\u) + \f(\u), 
\end{align}
which include Hamiltonian and gradient systems as special cases.
Here $\f : \VV \to \VV'$ and $\C : \VV \to \L(\VV,\VV')$ are assumed to be at least continuous functions on some Banach space $\VV$ with dual $\VV'$, and $\L(\VV,\VV')$ denotes the space of linear bounded operators from $\VV$ to $\VV'$.
Moreover, $\H : \VV \to \RR$ is a continuously differentiable energy or storage functional with derivative $\H' : \VV \to \VV'$. 
For every point in time, equation \eqref{eq:sys} can therefore be understood in the sense of linear functionals in $\VV'$.
Any classical solution $u \in C^1(0,T;\VV)$ of \eqref{eq:sys} then satisfies 
\begin{align} \label{eq:energy}
\ddt \H(\u(t)) 
&= \langle \H'(\u(t)), \dt \u(t)\rangle \\
&= \langle \f(\u(t)), \dt \u(t)\rangle - \langle \C(\u(t)) \dt \u(t), \dt \u(t)\rangle  \notag
\end{align}
for all $0 \le t \le T$ where $\langle \cdot, \cdot \rangle$ denotes the duality product on $\VV' \times \VV$. In many cases, this \emph{energy identity} expresses the fact that the total energy $\H(\u)$ of the system changes in time only due to dissipation and the work done by external forces. 
Upon integration with respect to time, one can obtain a corresponding integral form
\begin{align} \label{eq:energy2}
\H(\u(t)) &= \H(\u(s)) + \int_s^t  \langle \f(\u(r)) - \langle \C(\u(r)) \dt \u(r), \dt \u(r)\rangle \dt \u(r)\rangle dr
\end{align}
which holds for all $0 \le s \le t \le T$ and which remains valid for less regular solutions solutions, e.g. $\u \in W^{1,p}(0,T;\VV)$. In that case, \eqref{eq:energy} 
is still valid for a.e. $0 < t < T$. 

Let us briefly mention two important cases that will be covered automatically by our results: 
(i) If $\C(u) = -\C(u)^*$ is skew-self adjoint and $\f(\u) \equiv 0$, then \eqref{eq:sys} models a Hamiltonian system and the energy $\H(\u)$ is preserved for all time. (ii) If $\C(u)=\C(\u)^*$ is positive definite and $\f(\u) \equiv \f \in \VV'$, then \eqref{eq:sys} is a gradient system and energy decays until a steady state is reached. 

Note that for the derivation of \eqref{eq:energy}, we only utilized the variational identity 
\begin{align} \label{eq:var}
\langle \C(\u(t)) \dt \u(t), \v\rangle &= -\langle \H'(\u(t)), \v\rangle  + \langle \f(\u(t)), \v\rangle  
\end{align}
for the special choice $\v = \dt \u(t)$. The validity of \eqref{eq:var} for all $\v \in \VV$ yields an equivalent variational formulation of the system \eqref{eq:sys} under consideration.

The energy identities \eqref{eq:energy} and \eqref{eq:energy2} play a fundamental role in the analysis of evolution problems \eqref{eq:sys} and they often encode important physical principles. 
Therefore, much research has been devoted to the construction and analysis of numerical methods that satisfy similar identities after discretization. 
In \cite{Gonzales96}, the concept of \emph{discrete derivative methods}, later often called \emph{discrete gradient methods}, was introduced 
which applied to \eqref{eq:sys} leads to time-stepping schemes of the form 
\begin{align} \label{eq:discrete_derivative}
\overline \C(\u^n,\u^{n-1}) \frac{\u^n - \u^{n-1}}{\tau} 
&= -\overline\H'(\u^n,\u^{n-1}) + \overline\f(\u^n,\u^{n-1}), \qquad n \ge 0. 
\end{align}
Particular approximations $\overline \C(\u^n,\u^{n-1})$, $\overline\H'(\u^n,\u^{n-1})$, $\overline\f(\u^n,\u^{n-1})$, i.e., the \emph{discrete derivative} and the \emph{average vector field} method, have been studied in \cite{Gonzales96,McLachlanEtAl99} and second order convergence with respect to the time step $\tau$ was established. Higher order extensions, i.e., the \emph{average vector field collocation} methods were proposed in \cite{CohenHairer11,Hairer10} for the numerical integration of Hamiltonian systems; their application to port-Hamiltonian systems was studied in \cite{CelledoniEtAl17}. 
The generalization to gradient and Hamiltonian systems on Riemannian manifolds was studied in \cite{HairerLubich13} and \cite{CelledoniEtAl18a,CelledoniEtAl18b}. 
Let us mention that the discrete gradient approach can also be utilized for the space discretization of nonlinear evolution problems; see \cite{MatsuoFurihata11} for details. 

In a recent paper \cite{Egger18}, we studied the systematic approximation of dissipative dynamical systems by means of Galerkin approximation in space and discontinuous Galerkin discretization in time. We here follow a similar route but and address problems of a different form \eqref{eq:sys} as well as their systematic discretization by Galerkin approximation in space and Petrov-Galerkin approximation in time.

As a first step of our analysis, we will show that the special structure of the problems under consideration is inherited automatically by Galerkin approximations of the corresponding variational formulation. Such approaches are frequently studied for discretization of partial differential equations \cite{ErnGuermond04,Thomee06} or in the context of model order reduction \cite{Antoulas05,BennerMehrmannSorensen05}. 
Let us emphasize that the structure preservation strongly depends on the particular form of 
the equation \eqref{eq:sys} and does, in general, not hold for other equivalent formulations 
of the evolution problem, like
\begin{align} \label{eq:syseq}
\dt \u  &= \B(\u) \H'(\u) + \g(\u),
\end{align}
which are frequently considered in the literature; see  \cite{Gonzales96,McLachlanEtAl99,Hairer10,CelledoniEtAl17} for instance. 
Note that \eqref{eq:syseq} can be transformed into \eqref{eq:sys} with $\C(\u) = \B(\u)^{-1}$ and $\f(\u) = \B(\u)^{-1} \g(\u)$, if $\B(\u)$ is invertible.
This may, however, not be the case after discretization. 

As a second step, we will show that corresponding energy identities also hold for time-discretization of \eqref{eq:sys} or \eqref{eq:var} by Petrov-Galerkin methods. Again, this observation is strongly based the particular structure of the evolution problems under consideration which should be taken into account in the modeling stage.
We will demonstrate by examples that this particular form arises quite naturally. 

We will further discuss the connection of our approach to the {\em discrete derivative} and the {\em average vector field collocation} methods, which can be viewed as special instances or inexact realizations of our methods.
The approach proposed in this paper, therefore, may provide further insight also into the analysis of these methods and the construction of new discretization schemes. 

\bigskip 

The remainder of the manuscript is organized as follows: 
In Section~\ref{sec:2}, we discuss the space discretization of the problem \eqref{eq:sys} by Galerkin approximation of the variational principle \eqref{eq:var}. In addition, we discuss inexact variants of the methods, which may be more convenient for a practical realization. 
In Section~\ref{sec:3}, we then study the time discretization by a Petrov-Galerkin approach and we highlight the connection to other methods that have been discussed in the literature. Again some level of inexactness 
is allowed that may facilitate the numerical treatment.
The applicability of our methods will be demonstrated in Section~\ref{sec:4}, 
where we discuss some typical test problems in finite and infinite dimensions.

\section{Space discretization} \label{sec:2}

Let $\VV_h \subset \VV$ be a closed sub-space of the state space $\VV$. 
We then consider the Galerkin approximation of the variational principle \eqref{eq:var}
in $\VV_h$ given by 
\begin{align} \label{eq:varh}
\langle \C(\u_h(t)) \dt \u_h(t), \v_h\rangle 
&= -\langle \H'(\u_h(t)), \v_h\rangle + \langle \f(\u_h(t)), \v_h\rangle 
\end{align}
which is assumed to hold for all $\v_h \in \VV_h$ and for all times $t$ relevant for the problem.
As a direct consequence of the particular structure of the system under consideration, we obtain the following rather general structure result. 
\begin{theorem} \label{thm:energyh}
Let $\u_h \in C^1([0,T];\VV_h)$ satisfy \eqref{eq:varh} for all $0 \le t \le T$ and $\v_h \in \VV_h$. 
Then   
\begin{align} \label{eq:energyh}
\ddt \H(\u_h(t)) &=  \langle \f(\u_h(t)),\dt \u_h(t)\rangle -\langle \C(\u_h(t)) \dt \u_h(t), \dt \u_h(t)\rangle.
\end{align}
As a consequence, one also has the integral identity
\begin{align*} 
\H(\u_h(t)) &= \H(\u_h(s))  + \int_s^t \langle \f(\u_h(r)), \dt \u_h(r)\rangle - \langle \C(\u_h(r)) \dt \u_h(r), \dt \u_h(r)\rangle \, dr
\end{align*}
for all $0 \le s \le t \le T$. The second identity remains valid for non-smooth solutions, e.g., if $\u_h \in H^1(0,T;\VV_h)$, while the first identity holds for a.e. $t$ in that case. 
\end{theorem}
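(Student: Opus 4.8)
The plan is to reproduce the derivation of the continuous energy identity \eqref{eq:energy} almost verbatim at the discrete level, the single new ingredient being an argument that keeps the relevant test function inside the subspace $\VV_h$. First I would differentiate the scalar function $t \mapsto \H(\u_h(t))$ via the chain rule. Since $\H$ is continuously differentiable and $\u_h \in C^1([0,T];\VV_h)$, this gives
\begin{align*}
\ddt \H(\u_h(t)) = \langle \H'(\u_h(t)), \dt \u_h(t)\rangle,
\end{align*}
exactly as in the first line of \eqref{eq:energy}. The whole task then reduces to rewriting the right-hand side by means of the Galerkin identity \eqref{eq:varh}.

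The crucial step, and really the heart of the matter, is to verify that $\dt \u_h(t)$ is an admissible test function, i.e. that $\dt \u_h(t) \in \VV_h$; only then may \eqref{eq:varh}, which holds solely for $\v_h \in \VV_h$, be invoked with this particular choice. This is where the hypothesis that $\VV_h$ is a \emph{closed} subspace enters: each difference quotient $\tau^{-1}\big(\u_h(t+\tau) - \u_h(t)\big)$ lies in the linear space $\VV_h$, and hence so does its limit $\dt \u_h(t)$ as $\tau \to 0$. Choosing $\v_h = \dt \u_h(t)$ in \eqref{eq:varh} and rearranging then yields
\begin{align*}
\langle \H'(\u_h(t)), \dt \u_h(t)\rangle = \langle \f(\u_h(t)), \dt \u_h(t)\rangle - \langle \C(\u_h(t)) \dt \u_h(t), \dt \u_h(t)\rangle,
\end{align*}
which combined with the chain-rule identity above is precisely \eqref{eq:energyh}.

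For the integral identity I would simply integrate \eqref{eq:energyh} from $s$ to $t$ and apply the fundamental theorem of calculus; under $C^1$ regularity the integrand is continuous, so nothing further is needed. For the non-smooth case $\u_h \in H^1(0,T;\VV_h)$ the same scheme runs almost everywhere: the composition of the $C^1$ functional $\H$ with the absolutely continuous curve $\u_h$ renders $t \mapsto \H(\u_h(t))$ absolutely continuous with a.e. derivative $\langle \H'(\u_h(t)), \dt \u_h(t)\rangle$, while $\dt \u_h(t) \in \VV_h$ for a.e. $t$ holds directly, since the weak time derivative of a $\VV_h$-valued $H^1$ function is itself $\VV_h$-valued. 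Hence \eqref{eq:varh} with $\v_h = \dt \u_h(t)$ is legitimate for a.e. $t$, and integrating the resulting pointwise identity recovers the stated integral form. The only place requiring genuine care is the justification of the chain rule in this Sobolev setting; in the smooth case the argument is an immediate substitution and presents no obstacle at all.
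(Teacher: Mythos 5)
Your proposal is correct and follows essentially the same route as the paper's proof: differentiate $\H(\u_h(t))$ by the chain rule, test \eqref{eq:varh} with $\v_h = \dt\u_h(t)$ (admissible because $\dt\u_h(t)\in\VV_h$), and integrate to obtain the second identity. Your additional remarks on closedness of $\VV_h$ and the $H^1$ case merely flesh out details the paper leaves implicit.
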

\begin{proof}
The first identity follows by formal differentiation of $\H( \u(t))$ with respect to time and use of identity \eqref{eq:varh} with $\v_h = \dt \u_h(t)$; this is possible, since $\dt \u_h(t) \in \VV_h$ is an admissible test function. The second identity then simply follows by integration of the first identity with respect to time.
\end{proof}

\begin{remark} \label{rem:structure}
Let us emphasize that the result of the above theorem is a direct consequence of the particular form \eqref{eq:sys} of the evolution problem under consideration. In general, the argument does not apply, and 
the result does not hold, if the discretization is based on an equivalent reformulation of the problem; see e.g. \eqref{eq:syseq} and the corresponding remarks in the introduction.
\end{remark}

\begin{remark} \label{rem:inexact}
A similar result can be obtained, if reasonable approximations for the individual terms in the discrete variational problem \eqref{eq:varh} are used. 
As an example, let us consider an inexact Galerkin approximation of the form 
\begin{align} \label{eq:varhinex}
\langle \C_h(\u_h(t)) \dt \u_h(t), \v_h\rangle 
&= -\langle \H_h'(\u_h(t)), \v_h\rangle + \langle \f_h(\u_h(t)), \v_h\rangle.
\end{align}
Then we still obtain energy identities similar to \eqref{eq:energyh} or its integral form with $\H(\u)$, $\C(\u)$, and $\f(\u)$ replaced by $\H_h(\u_h)$, $\C_h(\u_h)$, and $\f_h(\u_h)$. One may even replace the duality product $\langle \cdot,\cdot \rangle$ on $\VV' \times \VV$ by another duality product $\langle \cdot,\cdot \rangle_h$ on $\VV_h' \times \VV_h$ and could even consider non-conforming Galerkin approximations with $\VV_h \not\subset \VV$; we refer to \cite{MatsuoFurihata11} for considerations in this direction and to Section~\ref{sec:4} for examples.
\end{remark}

\section{Time discretization} \label{sec:3}

We now turn to the time discretization of the variational problem \eqref{eq:var}.
Since this variational form of \eqref{eq:sys} is inherited by Galerkin approximation in space, 
the following arguments also cover problems that have already been discretized in space.

Let $T_N = \{0=t^0 < \ldots < t^N=T\}$ be a partition of $[0,T]$, set $\tau^n = t^n - t^{n-1}$, and denote by $P_k(T_N;\VV)$, $k \ge 0$ the space of piecewise polynomial functions over the partition $T_N$ with values in $\VV$. 
For the time discretization of \eqref{eq:var}, we consider the following Petrov-Galerkin approach: 
Find $u_N \in P_{k+1}(T_N;\VV) \cap H^1(0,T;\VV)$ satisfying
\begin{align} \label{eq:varN}
\int_{t^{n-1}}^{t^n} \langle \C(\u_N) \dt \u_N(t), \widetilde \v_N(t) \rangle dt 
&= \int_{t^{n-1}}^{t^n}  \langle \f(\u_N(t),\widetilde v_N(t)\rangle -\langle \H'(\u_N(t),\widetilde v_N(t)\rangle dt  
\end{align}
for all $\widetilde v_N \in P_{k}(T_N;\VV)$ and all time intervals $1 \le n \le N$.
Note that $u_N$ is a piecewise polynomial function of $t$ of degree $k+1$ and globally continuous, while $\widetilde v_N$ is a piecewise polynomial of degree $k$ and may be discontinuous at time points $t_n$, $n=1,\ldots,N-1$; therefore, \eqref{eq:varN} is a Petrov-Galerkin approximation. 

\begin{remark} \label{rem:various}
It suffices to consider scalar valued test functions $\widetilde \v_N \in P_k(T_h;\RR)$ in the discrete variational problem, in which case one obtains the equivalent formulation
\begin{align} \label{eq:varN2}
\int_{t^{n-1}}^{t^n} \C(\u_N) \dt \u_N(t) \widetilde \v_N(t) dt 
&= \int_{t^{n-1}}^{t^n}  \f(\u_N(t),\widetilde v_N(t) -\H'(\u_N(t),\widetilde v_N(t)  dt,
\end{align}
which then has to be understood as an equation in $\VV'$; compare with the original system \eqref{eq:sys} which is equivalent to the variational formulation \eqref{eq:var} on the continuous level. 
This will be useful for our discussions later on.
\end{remark}
With similar arguments as before, we now obtain the following energy identity.
\begin{theorem} \label{thm:energyN}
Let $\u_N \in P_{k+1}(T_N;\VV) \cap H^1(0,T;\VV)$ solve \eqref{eq:varN} for all $\widetilde \v_N \in P_k(T_N;\VV)$ and all $1 \le n \le N$ or, equivalently, \eqref{eq:varN2} for all $\widetilde \v_N \in P_k(T_N;\RR)$. Then 
\begin{align*} 
\H(\u_N(t^n)) = \H(\u_N(t^m)) + \int_{t^{n-1}}^{t^n} \langle \f(\u_N(t)),\dt \u_N(t)\rangle - \langle \C(\u_N(t)) \dt \u_N(t), \dt \u_N(t)\rangle dt
\end{align*}
for all time instances $0 \le t_m \le t_n \le T$ of the respective time grid $T_N$.
\end{theorem}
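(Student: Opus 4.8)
The plan is to establish the identity on a single subinterval and then sum. On $[t^{n-1},t^n]$ the goal is the local energy balance
\[
\H(\u_N(t^n)) - \H(\u_N(t^{n-1})) = \int_{t^{n-1}}^{t^n} \langle \f(\u_N(t)), \dt \u_N(t)\rangle - \langle \C(\u_N(t)) \dt \u_N(t), \dt \u_N(t)\rangle \, dt,
\]
which is the exact discrete analogue of the continuous identity \eqref{eq:energy2} over one time step. The global statement then follows by summing over the intervals between $t^m$ and $t^n$; since $\u_N$ is globally continuous, the interior nodal contributions telescope and only the boundary values $\H(\u_N(t^m))$ and $\H(\u_N(t^n))$ remain.

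To obtain the local balance, I would first use that $\u_N$ restricts to a polynomial of degree $k+1$ on $[t^{n-1},t^n]$, hence is smooth there, so the chain rule and the fundamental theorem of calculus give
\[
\H(\u_N(t^n)) - \H(\u_N(t^{n-1})) = \int_{t^{n-1}}^{t^n} \ddt \H(\u_N(t)) \, dt = \int_{t^{n-1}}^{t^n} \langle \H'(\u_N(t)), \dt \u_N(t)\rangle \, dt.
\]
The decisive observation is that $\dt \u_N$ restricts to a polynomial of degree $k$ on this interval and is thus an admissible test function, $\dt \u_N \in P_k(T_N;\VV)$. Substituting $\widetilde \v_N = \dt \u_N$ into \eqref{eq:varN}, solving for $\int_{t^{n-1}}^{t^n} \langle \H'(\u_N), \dt \u_N\rangle \, dt$, and inserting the result into the previous display yields the local balance.

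The role of the Petrov-Galerkin pairing of polynomial degrees is what I expect to be the crux, and it is the discrete counterpart of the choice $\v = \dt \u$ used to derive \eqref{eq:energy}. Precisely because the trial space has degree $k+1$ while the test space has degree $k$, the time derivative $\dt \u_N$ lands in the test space and may be inserted as a test function; this is what renders the scheme structure preserving by construction. No further analytical obstacle arises: continuity of $\C$ and $\f$ together with smoothness of $\u_N$ on each interval makes all integrands continuous, so every integral is well defined, and the global continuity of $\u_N \in H^1(0,T;\VV)$ makes the telescoping immediate.
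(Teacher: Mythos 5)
Your proposal is correct and follows essentially the same route as the paper's own proof: apply the fundamental theorem of calculus to $\H(\u_N)$ on one subinterval, observe that $\dt\u_N$ lies in the test space $P_k(T_N;\VV)$ by the Petrov--Galerkin degree pairing, substitute it as the test function, and pass to general $m$ by telescoping (the paper phrases this as induction).
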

\begin{proof}
Using the fundamental theorem of calculus, we obtain
\begin{align*}
\H(\u_N(t^n)) 
&= \H(\u_N(t^{n-1})) + \int_{t^{n-1}}^{t^n} \langle \H'(\u_N(t)), \dt \u_N(t)\rangle \, dt = (i).
\end{align*}
By choice of the ansatz and test spaces, $\widetilde \v_N = \dt \u_N$ is an admissible test function for problem \eqref{eq:varN}, and we can replace the right hand side (i)  by 
\begin{align*}
(i) =  \int_{t^{n-1}}^{t^n} \langle \f(\u_N(t)),\dt \u_N(t)\rangle -\langle \C(\u_N(t)) \dt \u_N(t), \dt \u_N(t)\rangle dt.
\end{align*}
This proves the result for $m=n-1$ and the general case follows by induction.
\end{proof}

\begin{remark}
Similar to the space discretization, the proof and validity of the discrete energy identity strongly
relies on the particular structure of the problem \eqref{eq:sys} and its variational formulation \eqref{eq:var}. 
Let us further note that the energy identity here only holds at specific points in time and we do not have a pointwise energy identity like \eqref{eq:energy} or \eqref{eq:energyh} after time discretization. 
\end{remark}

Let us next comment on the connection to other approximation schemes that 
have been proposed for the time discretization of Hamiltonian and gradient systems.
\begin{remark} \label{rem:avf}
Consider the case $k=0$ in the Petrov-Galerkin method \eqref{eq:varN}. 
Then $\u_N$ is piecewise linear in time and $\widetilde \v_N$ is piecewise constant.
Using the abbreviation $\u^n = \u_N(t^n)$, the scheme \eqref{eq:varN} can be written equivalently as
\begin{align*}
\overline \C(\u^n,\u^{n-1}) \frac{\u^n - \u^{n-1}}{\tau^n} 
&= -\overline \H'(\u^n,\u^{n-1}) + \overline \f(\u^n,\u^{n-1}),
\end{align*}
with averages $\overline \C(\u^n,\u^{n-1}) = \frac{1}{\tau^n} \int_{t^{n-1}}^{t^n}\C(\u^n(\tau)) d\tau$, $\overline \H'(\u^n,\u^{n-1}) = \frac{1}{\tau^n} \int_{t^{n-1}}^{t^n} \H'(\u^n(\tau)) d\tau$, and $\overline\f(\u^n,\u^{n-1}) = \frac{1}{\tau^n} \int_{t^{n-1}}^{t^n} \f(\u^n(\tau)) d\tau$; here we used $\u^n(\tau)=\u^{n-1} + \tau (\u^{n} - \u^{n-1})$ for abbreviation.
For $k=0$, the method \eqref{eq:varN} thus coincides with a discrete gradient method outlined in the introduction; see \cite{Gonzales96,McLachlanEtAl99} and \cite{CelledoniEtAl17,HairerLubich13} for further details.
\end{remark}

\begin{remark} \label{rem:avf_collocation}
Similar as in the previous section, we can also allow for an inexact realization of the Petrov-Galerkin approximation without destroying the essential structure required for the proof of the energy identity.
Using a quadrature method to approximate the integral on the right hand side of \eqref{eq:varN2}, for instance, leads to
\begin{align} \label{eq:quad}
\int_{t^{n-1}}^{t^n} \C(\u_N(t)) \dt \u_N(t) \widetilde \v_N(t) \dt 
\approx \tau_n \sum_{j=0}^{k} w_j\C(\u_N(t^n_j)) \dt \u_N(t^n_j) \widetilde v_N(t^n_j),
\end{align}
with intermediate time points $t^n_j = t^{n-1} + \gamma_j (t^n - t^{n-1})$.
For convenience of notation, we here employed the equivalent formulation \eqref{eq:varN2} with scalar valued test functions $\widetilde \v_N \in P_k(T_N;\RR)$.
When choosing $\widetilde \v_N(t)$ as the Lagrange polynomials for the quadrature points $t^n_j$, this immediately leads to an \emph{average vector field collocation method}; see \cite{CelledoniEtAl17,Hairer10,HairerLubich13} for details.
If $\C(\u)=\C$ is independent of $\u$, then exact integration of the right hand side of \eqref{eq:varN2} is achieved by the Gau{\ss} quadrature rule. 
\end{remark}
The proposed Petrov-Galerkin approach can therefore be used to derive well-known time-discretization methods in a systematic manner and it provides a framework to generalize these methods to a wider class of problems.

\section{Examples} \label{sec:4}

We now illustrate the applicability of the proposed discretization approaches in space and time
by discussing some typical problems we have in mind.

\subsection{Magneto-quasistatics} \label{sec:num1}

As a first test problem, we consider the equations of magneto-quasistatics, which arise in the eddy current approximation of Maxwell's equations \cite{AlonsoValli10}. 
In this model, the magnetic flux density $\b=\curl \a$ is represented by a magnetic vector potential $\a$, which is assumed to satisfy 
\begin{align*}
\sigma \dt \a + \curl (\nu(\curl \a) \curl \a) &= -\j \qquad \text{in } \Omega.
\end{align*}
Here $\sigma$ denotes the electric conductivity, $ \nu=\mu^{-1}$ is inverse of the magnetic permittivity tensor $\mu$, and $\j$ a given source current density.
For ease of presentation, we assume that $\sigma$ is uniformly positive, in which case 
$\e=\dt \a$ then corresponds to the electric field density. 
Moreover, we consider homogeneous boundary conditions 
\begin{align*}
\a \times \n &= 0  \qquad \text{on } \partial\Omega,
\end{align*}
where $\n$ is the outward pointing unit normal vector on $\partial\Omega$.
The variational formulation for the above problem then reads 
\begin{align*}
\int_\Omega \sigma \dt \a(t) \cdot \w dx + \int_\Omega (\nu(\curl \a(t)) \cdot \curl \a(t)) \cdot \curl \w dx &= - \int_\Omega \j(t) \cdot \w dx,   
\end{align*}
which is supposed to hold for all $t$ of relevance and for all $\w \in H_0(\curl;\Omega)$ having a weak curl in $L^2(\Omega)$ and satisfying zero boundary conditions $\w \times \n=0$ on $\partial\Omega$; see~\cite{AlonsoValli10} for details.
We further define a scalar potential for the function $\nu(\b) \cdot \b$, i.e., 
\begin{align*}
h(\b) = \int_0^\b (\nu(\b) \cdot \b) \cdot d\b
\end{align*}
such that $\nabla_\b h(\b) = \nu(\b) \cdot \b$,
and recall that the expression 
\begin{align*}
\H(\a) = \int_\Omega h(\curl\a) dx, 
\end{align*}
then denotes magnetic energy of the system. 
By differentiation, we obtain
\begin{align*}
\ddt \H(\a(t)) 
&= \int_\Omega (\nu(\curl \a(t)) \cdot \curl \a(t)) \cdot \curl \dt \a(t) dx \\
&= -\int_\Omega \sigma \dt \a(t) \cdot \dt \a(t) dx - \int_\Omega \j(t) \cdot \dt \a(t) dx,
\end{align*}
where we used the variational formulation with test function $\w=\dt \a$ to perform the last step. 
This energy identity expresses the intuitive fact that the magnetic energy of the system is only altered due to dissipation caused by eddy currents and the work done by the excitation currents. 

Setting $\VV=H_0(\curl;\Omega)$, $\u = \a$, $\H(\u) = \int_\Omega h(\a) dx$, $\H'(\u) = \curl(\nu(\curl\a)) \curl a$, $\C(\u) \dt \u =\sigma \dt \a$,  and $\f(\u)=-\j$, one can see that the magneto-quasistatic problem perfectly fits into our abstract framework; note that, formally, the last three terms above have to be understood as linear functionals on $H_0(\curl;\Omega)$.

A Galerkin approximation of the variational principle in an appropriate finite element space $\VV_h \subset H_0(\curl;\Omega)$ thus leads to a semi-discretization which automatically inherits the energy identity derived above. After choice of a basis for the space $\VV_h$, the semi-discrete problem can be cast into a  system of ordinary differential equations
\begin{align*}
M_\sigma \dt a(t) + K_\nu(a(t)) a(t) &= -j(t).
\end{align*}
A further time discretization of this problem by a Petrov-Galerkin approximation, as proposed in Section~\ref{sec:3}, then leads to a fully discrete scheme which satisfies a corresponding energy 
identity in integral form and thus automatically incorporates the physical principle of energy conservation.

\begin{remark}
Before closing this section, let us briefly comment on some natural generalizations: Without any complications, one can consider other types of boundary conditions and a field dependent conductivity $\sigma(\e)$, where $\e = \dt \a$ denotes the electric field density. If $\sigma$ is allowed to vanish identically on a subdomain $\Omega_{nc} \subset \Omega$, then one has to restrict $\a$ in $\Omega_{nc}$ by some gauging conditions \cite{AlonsoValli10}; these can be treated, e.g., as additional constraints with similar arguments as in Section~\ref{sec:num3} below. 
\end{remark}

\subsection{Cahn-Hilliard equation} \label{sec:num2}

A simple model for the phase separation in binary fluids is given by the Cahn-Hilliard equation \cite{Elliott}
\begin{alignat*}{2}
\dt u &= -\Delta (\gamma \Delta u - \psi'(u)) \qquad &&\text{in } \Omega, \\
0 &= \dn u = \dn \Delta u \qquad && \text{on } \partial\Omega.
\end{alignat*}
Here $u$ represents the difference of the phase fractions of the two fluid components, $\psi$ is a double well potential with two minima in $[-1,1]$, and $\gamma>0$ is a positive constant. 
Using the homogeneous Neumann conditions $\dn u = 0$, one can verify that the 
integral $\int_\Omega u dx$ does not change over time, i.e, $ \ddt \int_\Omega u(t) dx = \int_\Omega \dt u(t) dx = 0$. Without loss of generality, we will further assume that $\int_\Omega u(t) dx = 0$. 

Let us denote by $(-\Delta_\gamma)^{-1}$ solution operator for the Neumann problem 
\begin{align*}
-\divergence(\gamma \nabla w) &= f \quad \text{in } \Omega, \qquad \gamma \dn w = 0 \quad \text{on } \partial\Omega,  
\end{align*}
which is has a unique solution $w \in H^1(\Omega)$ with zero average for any sufficiently regular right hand side $f$ with zero average. 
By formally applying this operator to the Cahn-Hilliard equation,
we obtain the simplified system
\begin{alignat*}{2}
(-\Delta_\gamma)^{-1}\dt u &= \Delta u - \psi'(u) \qquad &&\text{in } \Omega, \\
                  0 &= \dn u              \qquad &&\text{on } \partial\Omega,
\end{alignat*}
which will be the basis for our further considerations.
By testing the simplified problem with appropriate test functions $v$, we obtain the weak form 
\begin{align*}
((-\Delta_\gamma)^{-1}\dt u(t), v) &= -(\nabla u(t), \nabla v) - (\psi'(u(t)), v),
\end{align*}
of the evolution problem which is assumed to hold for any sufficiently regular test function $v$ 
and for any time $t$ under consideration.
Here $(a,b) = \int_\Omega a b \, dx$ was used to abbreviate the $L^2$-scalar product over $\Omega$. 

We will now show that, besides the conservation of mass, the solutions of the Cahn-Hilliard problem also describes the decay of the free energy
\begin{align*}
\H(\u) = \int_\Omega  \frac{1}{2} |\nabla u|^2 + \psi(u) dx.
\end{align*}
By inserting a solution and formally differentiating with respect to time, we obtain
\begin{align*}
\ddt \H(u(t)) 
= (\nabla u(t), \nabla \dt u(t)) + (\psi'(u(t)), \dt u(t) 
= -((-\Delta)^{-1} \dt u(t), \dt u(t)).
\end{align*}
For the second step, we used the variation principles with test function $v=\dt u(t)$.
This shows that the free energy is decreasing until the system reaches a steady state. 

A brief inspection of the above derivations shows that the Cahn-Hilliard problem in its simplified form has exactly the structure \eqref{eq:sys}, with $\VV = \{v \in H^1(\Omega) : \int_\Omega v dx = 0\}$, $\C(\u) = (-\Delta)^{-1}$ independent of $u$, and $\f(\u) \equiv 0$. 

A standard Galerkin approximation of the simplified form of the Cahn-Hilliard system in space would construct an approximation $u_h$ with values in $\VV_h \subset \VV$ satisfying a variational principle of the form 
\begin{align*} 
((-\Delta_\gamma^{-1} \dt u_h(t), v_h) &= -(\nabla u_h(t), \nabla v) - (\psi'(u_h(t)),v_h).
\end{align*}
While theoretically sound, such a method cannot be realized in practice, since the application of the inverse Laplacian $(-\Delta_\gamma)^{-1}$ can in general not be computed.
To overcome this problem, we utilize a discrete approximation 
$(-\Delta_{\gamma,h})^{-1}$, which is defined via the solution $\mu_h = (-\Delta_{\gamma,h})^{-1} \in \VV_h$ of 
\begin{align*}
(\gamma \nabla \mu_h, \nabla \eta_h) &= (f,\eta_h) \qquad \forall \eta_h \in \VV_h.
\end{align*}
Again, a zero average condition for the right hand side $f$ and the solution $\mu_h$ has to be imposed to guarantee existence of a unique solution. 
The discrete approximation $u_h$ for the Cahn-Hilliard system is then sought via the discrete variational principle
\begin{align*} 
((-\Delta_{\gamma,h})^{-1} \dt u_h(t), v_h) &= -(\nabla u_h(t), \nabla v) - (\psi'(u_h(t)),v_h),
\end{align*}
which is again required to hold for all $v_h \in \VV_h$ and all relevant $t$.
With similar reasoning as on the continuous level, one can verify the energy identity 
\begin{align*}
\ddt \H(\u_h(t)) &= -((-\Delta_h)^{-1} \dt u_h(t), \dt u_h(t)),
\end{align*}
which shows that also the energy of the discretized system decreases until a steady state is reached.
Let us note that the above method corresponds to an inexact Galerkin approximation in space, as discussed in Remark~\ref{rem:inexact}. 

For the sub-sequent time discretization, we can further employ a Petrov-Galerkin approximation as outlined in Section~\ref{sec:3}. Following our considerations in Remark~\ref{rem:avf_collocation}, this here coincides with a particular average vector field collocation method based on Gau\ss-quadrature \cite{HairerLubich13}. Other quadrature rules may, however, be used as well.

\subsection{Constrained Hamiltonian systems} \label{sec:num3}

As another class of applications, we now consider finite dimensional Hamiltonian systems with holonomic constraints 
\begin{align}
\dt q &= - H_p(p,q) \label{eq:h1}\\
\dt p &= H_q(p,q) + f(p,q) +  g_q(p,q)^\top \lambda \label{eq:h2}\\
    0 &= g(q).  \label{eq:h3a}
\end{align}
Such systems arise, for instance, in the modeling of multibody systems
but also in the context of electrical networks \cite{HairerLubichRoche89,KunkelMehrmann06}. 
Here $q,p$ are the vectors of generalized coordinates and momenta, 
$\H(p,q)$ is the Hamiltonian, i.e, the energy or storage functional,
$f(p,q)$ denotes the external forces, $\lambda$ is the vector of Lagrange multipliers for the constraints, 
and $\lambda^\top q_q(p,q)$ are the corresponding forces. 
We assume in the following that $p$, $q$, $\lambda$ are real valued vectors and use subscripts to denote partial derivatives. We then denote by $\langle \cdot,\cdot\rangle$ the Euclidean scalar product on $\RR^n$. 

By differentiating the constraint equation \eqref{eq:h3a} with respect to time, one obtains
\begin{align} \label{eq:h3}
0 &= g_q(q) \dt q, 
\end{align}
which is equivalent to $g(q)=0$ up to a constant factor that can be fixed by an appropriate initial conditions. 
The weak formulation of \eqref{eq:h1}--\eqref{eq:h3} here reads 
\begin{align}
\langle\dt q(t), v\rangle &= -\langle H_p(p(t),q(t)), v\rangle \label{eq:h1var}\\
\langle\dt p(t), w\rangle &=  \langle H_q(p(t),q(t),w\rangle + \langle f(p(t),q(t)),w\rangle + \langle g_q(q(t))^\top \lambda(t) ,w \rangle \label{eq:h2var}\\
0 &= \langle g_q(q(t)) \dt q(t),\eta\rangle  \label{eq:h3var}
\end{align}
These identities are again assumed to hold for all test functions $v$, $w$, $\eta$, and all $t$ of relevance.
For any smooth solution $(p,q,\lambda)$ of \eqref{eq:h1var}--\eqref{eq:h3var}, we then obtain 
\begin{align*}
\ddt H(p(t),q(t)) 
&= \langle H_p(p(t),q(t)),\dt p(t)\rangle + \langle H_q(p(t),q(t)),\dt q(t)\rangle \\
&= -\langle\dt q(t), \dt p(t)\rangle + \langle\dt p(t), \dt q(t)\rangle \\
& \qquad \qquad - \langle f(p(t),q(t)),\dt q(t)\rangle - \langle\lambda(t)^\top g_q(q(t)),\dt q(t)\rangle \\
&= - \langle f(p(t),q(t)),\dt q(t)\rangle.
\end{align*}
In the last step, we here used that $\langle g_q(q)^\top \lambda, \dt q\rangle = \langle g_q(q) \dt q,\lambda \rangle=0$ which follows from testing equation \eqref{eq:h3var} with $\eta=\lambda$.
This identity states that the energy $H(p,q)$ of the system can only change by 
the work of external forces. 

The system \eqref{eq:h1}--\eqref{eq:h2} and \eqref{eq:h3} and its variational formulation \eqref{eq:h1var}--\eqref{eq:h3var} are again of the abstract form \eqref{eq:sys}, \eqref{eq:var} with $\u=(p,q,\lambda)$, $\H(\u)=H(p,q)$, as well as
\begin{align*}
\C(\u)=\begin{pmatrix} 0 & I & 0 \\ -I & 0 & 0 \\ 0 & g_q(q) & 0 \end{pmatrix}
\qquad \text{and} \qquad
\f(\u)=\begin{pmatrix} 0 \\ -f(p,q) - g_q(q)^\top \lambda \\ 0 \end{pmatrix}.
\end{align*}
Hence all results about the approximation by Galerkin methods obtained in the previous sections can be applied immediately.

Any Galerkin projection of \eqref{eq:h1var}--\eqref{eq:h3var} into a subspace $\VV_h \subset \VV$ thus automatically inherits the energy identity stated above. Our results therefore cover 
general model order reduction approaches based on Galerkin projection \cite{BennerMehrmannSorensen05}.
The integral form of the energy identity also remains valid after time-discretization, if an appropriate Petrov-Galerkin approximation is used; see Theorem~\ref{thm:energyN} and Remark~\ref{rem:various}. 

\begin{remark}
Let us recall that in the variational formulation \eqref{eq:h1var}--\eqref{eq:h3var}, it suffices to 
test with scalar valued test functions; see Remark~\ref{rem:various}. 
Testing the linearized constraint \eqref{eq:h3var} with the particular test function $\eta = 1|_{[t^{n-1},t^n]}$ then yields
\begin{align*}
0 &= \int_{t^{n-1}}^{t^n} g_q(q(t)) \dt q(t) \eta(t) dt 
= \int_{t^{n-1}}^{t^n} \ddt g(q(t)) dt 
= g(q(t^{n})) - g(q(t^{n-1})). 
\end{align*}
The original constraint \eqref{eq:h3a} thus remains valid for all time points $t^n$, if it was valid at initial time and if the piecewise constant functions in time are elements of the test space $\widetilde V_N$ of the time-discretization \eqref{eq:varN}. The Petrov-Galerkin time discretization, therefore, does formally not suffer from the \emph{drift-off} phenomenon; see \cite{HairerLubichRoche89,KunkelMehrmann06}. 
\end{remark}

\section{Discussion}

In this paper, we presented an abstract framework for the numerical approximation of evolution problems with an underlying Hamiltonian- or gradient structure and we showed that this underlying structure is preserved under discretization with Galerkin methods in space and Petrov-Galerkin approximation in time. We further showed that some inexactness in the numerical realization of the Galerkin approximations is possible which 
may facilitate the numerical realization. The discrete derivative and average vector field collocation methods could be interpreted as such inexact realizations of the Petrov-Galerkin time-discretization.  
In case of a non-quadratic Hamiltonian, the handling of the term $\langle \H'(u),v\rangle$ in \eqref{eq:var} and the corresponding discrete equations is more subtle and may deserve further considerations.

\section*{Acknowledgements}

The work of the author was supported by the ``Excellence Initiative'' of the German Federal and State Governments via the Graduate School of Computational Engineering GSC~233 at Technische Universit\"at Darmstadt and by the German Research Foundation (DFG) via grants TRR~146, TRR~154, and Eg-331/1-1.


\end{document}